\documentclass{article}                    
%
%
\usepackage{amsfonts}
\usepackage{amsmath}

%
%
%
%
%

\textwidth6.25in \textheight8.5in \oddsidemargin.25in
\linespread{1.3}
\topmargin0in
\newtheorem{theorem}{Theorem}
\newtheorem{acknowledgements}[theorem]{Acknowledgements}

\newtheorem{definition}{Definition}

\newtheorem{lemma}{Lemma}

\newtheorem{proposition}{Proposition}
\newtheorem{remark}{Remark}

\newenvironment{proof}[1][Proof]{\noindent\textbf{#1.} }{\ \rule{0.5em}{0.5em}}

\begin{document}

\title{Recurrence properties of a special type of Heavy-Tailed Random Walk}


\author{P\'{e}ter N\'{a}ndori
}
\date{\vspace{-5ex}}




\maketitle

\begin{abstract}
In the proof of the invariance principle for locally perturbed periodic
Lorentz process with finite horizon,
a lot of delicate results were needed concerning the recurrence
properties of its unperturbed version.
These were analogous to the similar properties of Simple
Symmetric Random Walk. However, in the case of Lorentz process with
infinite horizon, the analogous results for the corresponding random walk
are not known, either. In this paper, these properties are ascertained
for the appropriate random walk (this happens to be in the non normal
domain of attraction of the normal law). As a tool, an estimation of the 
remainder term in the local limit theorem for the corresponding 
random walk is computed.
\end{abstract}

\section{Introduction}
The appearance of the Brownian motion as a limit object in either stochastic
or deterministic models is an extremely important and interesting phenomenon.
The first result in this field is due to M. Donsker (see \cite{Do}) who proved
that the diffusively scaled Simple Symmetric Random Walk (SSRW) converges to 
the Brownian Motion in each dimension. Later, D. Sz\'{a}sz and A. Telcs 
in \cite{Sz-Te} proved that the local perturbation in the integer lattice 
of dimension at least two does not spoil the Brownian limit. \\
In the last decades, a more complex model, i.e. planar periodic Lorentz process was proven
to have Brownian motion, as a limit object. Here, one considers periodically 
situated fixed strictly convex smooth scatterers, and a dimensionless point particle moving among the scatterers and bouncing off at the boundaries according
to the classical law of mechanics (the angle of incidence coincides the angle of
reflection). In the case of finite horizon (i.e. when the free flight vector
of the particle is bounded), diffusive scaling produces Brownian motion (see 
\cite{Bu-Si} and \cite{Bu-Si-Ch}). In the case of infinite horizon (i.e. 
when the free flight vector of the particle is unbounded) a superdiffusive
scaling is needed to obtain the non-trivial Brownian limit (see 
\cite{Sz-Va} and \cite{Ch-Do}).
Again, the question of the effect of local perturbations naturally arises. This
topic has a physical motivation as well, since Lorentz process can be thought
of as the movement of a "classical" electron in a crystal, 
when local perturbation can be some 
impurities or some locally acting external force. The Brownian limit for 
diffusively scaled periodic Lorentz process with finite horizon 
and local perturbation was proven in 
\cite{D-D-K} and \cite{D-D-K2}. Note that here a more involved 
investigation was needed than in the case of SSRW, namely, the wide treatment
of recurrence properties in \cite{D-D-K} was essential. \\
Recently, D. Paulin and D. Sz\'{a}sz 
proved (\cite{Pa-Sz}) that
the random walk, which is very similar to the Lorentz process with infinite
horizon, with local impurities, enjoys the Brownian limit. However, they only
treated some simplified local perturbation (see later), and did not consider
the recurrence properties similar to the ones in \cite{D-D-K}, which 
are expected to be important in the case of infinite horizon, too.
Here, we are going to focus on these recurrence properties. \\
This paper is organized as follows. In Section \ref{secintro}, basic 
definitions, statements are given and another motivation for our
calculations (i.e. the proof of the polynomial decay of the 
velocity auto correlation function for some perturbed random walk) 
is provided. The quite well known local limit theorem for our specific 
type random walk will not be enough for our purposes, i.e. we need to
estimate the remainder term of it. Section \ref{seclimit} is devoted
to this computation. In Section \ref{secrecurrence}, the desired recurrence
properties are obtained, while in Section \ref{secremarks} we give a 
final remark, and indicate a possible direction of further research.

\section{Preliminaries}
\label{secintro}
Let us consider a Random Walk, the behavior of which is close to the one of 
the Lorentz process with infinite horizon.
Namely, define independent random variables $X_{i}$,  such that
\[\mathbb{P} \left(
 X_{i}=n  \right) = c_{1} |n|^{-3}, \]
if $n \neq 0$, and $E _{i}$ to be uniformly distributed on the $4$ unit
vectors in $\mathbb{Z}^2$. Now put $\xi _i = X_i E_i$.
(Here, of course, $c_1 = \frac{1}{2 \zeta (3)}$, but this will
not be important for us.)
Define the Heavy-Tailed Random Walk (HTRW) by $S_{n}:= \sum_{i=1}^{n} \xi{i}$.\\
This distribution is the same, as the one of the free flight vector 
of the Lorentz
process with infinite horizon (see \cite{Sz-Va}). However, one could think that
our choice is rather special, as the walker can only step along the $x$ and
$y$ axis. But this is not the case, as a particle performing 
Lorentz process can have arbitrary long steps only in finitely many directions,
too. Here, we choose that two particular directions, but this is not 
essential.\\
Further, define the one dimensional HTRW as
\[ Q_n := \sum_{i=1}^n X_i.\]
The quite well-known local limit theorem in one dimension states that
\begin{equation}
\label{limit1dim}
\mathbb{P} ( Q_n = x ) \sim \frac{1 }{2 \sqrt{\pi c_1 n \log n} } \exp \left(
- \frac{ x^2 }{4 c_1 n \log n } \right)
\end{equation}
and in two dimensions that
\begin{equation}
\label{limit2dim}
\mathbb{P} ( S_n = x ) \sim \frac{1 }{4 \pi c_1 n \log n } \exp \left(
- \frac{| x |^2 }{4 c_1 n \log n } \right).
\end{equation}

These can found in \cite{Rv}. Later, we will need estimations on the error
terms in (\ref{limit1dim}) and (\ref{limit2dim}), and by computing them,
a proof of (\ref{limit1dim}) and (\ref{limit2dim}) will be provided.\\
Further, we will use the notations
\begin{eqnarray*}
u_2(n) &=& \mathbb{P}(S_n =(0,0)),\\
u_1(n) &=& \mathbb{P}(Q_n =0).
\end{eqnarray*}

In the case of billiards, a quite frequent strategy is to prove exponential
decay of correlations (an interesting result for its own sake) and then to 
use this to prove convergence to the Brownian motion (see \cite{Ch}, for 
instance). As a motivation for our further calculations, we 
are going to illustrate that in the case of local perturbation,
this does not seem to be a good strategy. \\
For this consider the simplest case: a perturbed SSRW ($T_n$) in $\mathbb{Z}^d$,
where perturbation means
that in the origin there is no scatterer, i.e. outside of the origin $T_n$
behaves like an ordinary SSRW, while it flies
through the origin. More precisely,
\begin{equation}
 P \left( T_{n+1} = e_i | T_{n-1}= -e_i, T_n=0 \right) =1, \label{motivegy}
 \end{equation}
where $e_i$ is some neighboring point of the origin in $\mathbb{Z}^d$.
The following Proposition is well known in the physics literature 
(see, for example \cite{Sp}) but surprisingly, I was unable to find a 
mathematical proof for it.

\begin{proposition}
The velocity autocorrelation function of $T_n$ is $O \left( n^{-(d/2+1)}
\right)$.

\end{proposition}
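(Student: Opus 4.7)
I would prove the Proposition by first deriving an exact formula expressing $C(n):=\mathbb{E}[v_0\cdot v_n]$ as a difference of two local probabilities of $T$, and then extracting the required $n^{-(d/2+1)}$ decay from the Gaussian derivative supplied by the local limit theorem. The main obstacle will be handling the local-in-space perturbation at the origin when transferring local CLT information from the unperturbed walk to $T$.

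\emph{Step 1 (exact formula).} Write $v_i=T_{i+1}-T_i$. On $\{T_n\neq 0\}$ the velocity $v_n$ is a fresh uniform step independent of the past, while on $\{T_n=0\}$ one has $v_n=v_{n-1}$. Since two consecutive origin visits are incompatible with unit-length increments, $T_n=0$ forces $T_{n-1}\neq 0$, so on this event $v_{n-1}$ is itself a fresh uniform step, constrained moreover to equal $-T_{n-1}$. Taking the initial velocity $v_0$ uniform on $\{\pm e_i\}$ and using the lattice symmetries permuting these $2d$ directions yields
$$C(n) \;=\; -\frac{1}{2d}\Bigl[\mathbb{P}_{e_1}(T_{n-1}=e_1)-\mathbb{P}_{e_1}(T_{n-1}=-e_1)\Bigr],$$
where $\mathbb{P}_{e_1}$ denotes the law of $T$ with $T_0=0$, $v_0=e_1$ (so that $T_1=e_1$).

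\emph{Step 2 (cancellation in the local CLT for SSRW).} For the unperturbed SSRW $S$ started at $e_1$, the standard local CLT gives $\mathbb{P}_{e_1}(S_{n-1}=\pm e_1)\sim\bigl(d/(2\pi n)\bigr)^{d/2}\exp\bigl(-d|e_1\mp e_1|^2/(2n)\bigr)$, and a first-order Taylor expansion of the Gaussian in the displacement gap $|2e_1|^2=4$ produces
$$\mathbb{P}_{e_1}(S_{n-1}=e_1) - \mathbb{P}_{e_1}(S_{n-1}=-e_1) \;=\; O\bigl(n^{-(d/2+1)}\bigr).$$
This is the key cancellation: it upgrades the naive return-probability rate $n^{-d/2}$ by the desired extra factor $1/n$.

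\emph{Step 3 (perturbation analysis).} Since $T$ and $S$ agree pathwise until the first hitting time $\tau_0$ of the origin, conditioning on $\tau_0=k$ and on the entering neighbor $e$ yields the Markov renewal identity
$$\mathbb{P}_{e_1}^T(T_{n-1}=y) - \mathbb{P}_{e_1}^S(S_{n-1}=y) \;=\; \sum_{k=1}^{n-2}\sum_{e} f(k,e)\bigl[\mathbb{P}_{-e}^T(T_{n-2-k}=y) - \mathbb{P}_0^S(S_{n-1-k}=y)\bigr],$$
with $f(k,e)=\mathbb{P}_{e_1}^S(\tau_0=k,\,S_{k-1}=e)$; the factor $\mathbb{P}_{-e}^T$ reflects that $T$, upon entering the origin from $e$, is forced to leave at $-e$. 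The decisive point is that the sign flip $y\mapsto -y$ is compensated by the involution $e\leftrightarrow -e$ of the entering neighbor, so the axis symmetry preserves the Gaussian-derivative cancellation of Step 2; the SSRW local CLT together with its gradient form then bounds each summand appropriately, and summing the renewal series yields the target rate. For $d\geq 3$ the series is absolutely summable by transience of $S$, making this step routine; in $d=1,2$ the recurrence of $S$ forces more delicate bookkeeping, and this is the genuinely technical heart of the argument, which is the main obstacle.
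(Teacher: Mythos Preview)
Your outline is a reasonable strategy, but you explicitly leave the recurrent dimensions $d=1,2$ unfinished, calling Step~3 there ``the main obstacle''. Since the Proposition is stated for all $d\ge 1$, this is a genuine gap: the renewal series of Step~3 is not absolutely summable when $d\le 2$, and extracting the extra $1/n$ cancellation uniformly through the convolution with the first-hitting distribution requires work you have not supplied.

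The paper takes a different and much shorter route that sidesteps this difficulty entirely. In $d=1$ it observes that the perturbed walk $T$ is in pathwise bijection with an ordinary SSRW $U$: simply delete the origin and the forced pass-through step, so that $T_n$ corresponds to $U_{n-\tau(n)}$ where $\tau(n)$ counts the origin visits of $T$ before time $n$. The autocorrelation then becomes a difference of two nearby return probabilities of the \emph{unperturbed} walk $U$, namely $\tfrac12\bigl[\mathbb{P}(U_{2n}=0)-\mathbb{P}(U_{2n+1}=1)\bigr]$, and the $O(n^{-3/2})$ bound is immediate from the Edgeworth expansion---no renewal series, no perturbation comparison. For $d>1$ the paper uses reflection: with $V$ the hyperplane orthogonal to $T_0$, on the event $H$ that the path meets $V\setminus\{0\}$ by time $n$, reflection through $V$ pairs trajectories ending at $T_0$ with those ending at $-T_0$, so the contribution of $H$ to $\mathbb{E}\bigl[I_{\{T_n=T_0\}}-I_{\{T_n=-T_0\}}\bigr]$ vanishes exactly; the contribution of $\Omega\setminus H$ is then handled as in the one-dimensional case.

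Thus the paper converts the problem to an unperturbed SSRW estimate by a coupling/reflection trick, whereas your plan keeps the perturbed walk and tries to control it by renewal against SSRW. Your route may well be completable, but the paper's device is precisely what makes the proof short and avoids the obstacle you yourself flag.
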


\begin{proof}
First, suppose that $d=1$ and $T_0 =1$. We can identify our process 
with an unperturbed SSRW - $U_n$, say - by simply dropping the origin
and the extra step from it. Formally, define $\tau (n)
= \# \{1 \leq k <n:T_k =0 \}$. Now, if $T_n > 0$, then let $U(n - \tau(n))
= T_n$. If $T_n < 0$, then $U(n - \tau(n))= T_n + 1$.
Now, we have to show that
\begin{eqnarray*}
&& \mathbb{P}(U(2n)=0, U(2n+1) =1) - \mathbb{P}(U(2n+1)=1, U(2n+2) =0) \\
&=& \frac{1}{2}\left[  \mathbb{P}(U(2n)=0) - \mathbb{P}(U(2n+1)=1) \right] = 
O \left( n^{-3/2}\right),
\end{eqnarray*}
which is an elementary consequence of the 
well known Edgeworth expansion. \\
Now, suppose that $d>1$ and $T_0 = (1,0,...,0)$.
It suffices to prove
\begin{equation}
\int_{\Omega} I_{ \{T_n = T_0 \} } - I_{ \{T_n = -T_0 \} } dP =
O \left( n^{-(d/2+1)} \right). \label{motivketto}
\end{equation}
Let $V$ be the orthogonal complement space of $T_0$ and define
\begin{eqnarray*}
H = \{ \omega: (V \setminus 0) \cap \{ T_0, ..., T_n\} \neq \emptyset
\} \subset \Omega.
\end{eqnarray*}
Because of the reflection principle, the part of the integral in 
(\ref{motivketto})
over $H$ is zero. The integral over $\Omega \setminus H$ can be treated
similarly, as it was done in the one dimensional case.

\end{proof}


\section{Local Limit Theorem with Remainder Term}
\label{seclimit}

The aim of this section is to estimate remainder term in the limit theorem (\ref{limit2dim}).
To do this, first we have to deal with the one dimensional case. Similar calculations were
done previously, see, for example \cite{Ha-Pe} and \cite{Jo-Pa}.
However, in these articles only one dimensional, non-lattice distributions
were considered. Fortunately, we do not need precise calculation of the
remainder term, i.e. summability is enough for our purposes. As usual, we
start with the computation of the characteristic function.

\begin{lemma}
\label{lemma1}
For the characteristic function $\phi$ of $X_1$
\[ \phi(t) = 1-2c_1 t^2 |\log |t| | + O \left( t^2 \right), \]
as $t \rightarrow 0$.
\end{lemma}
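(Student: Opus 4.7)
The plan is to exploit the symmetry of $X_1$ to rewrite $\phi$ as a real cosine series, and then split the resulting sum at the natural cutoff $N = \lfloor 1/|t|\rfloor$ where the cosine ceases to be well-approximated by its Taylor polynomial. Since the law of $X_1$ is symmetric, one gets
\[
\phi(t) \;=\; 2c_1 \sum_{n=1}^{\infty} \frac{\cos(tn)}{n^3}, \qquad 1-\phi(t) \;=\; 2c_1 \sum_{n=1}^{\infty} \frac{1-\cos(tn)}{n^3}.
\]
For $n \leq N$ one has $|tn| \leq 1$, so $1-\cos(tn) = (tn)^2/2 + O((tn)^4)$ is accurate; for $n > N$ no expansion works and one falls back on $|1-\cos(tn)| \leq 2$.

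On the head the Taylor expansion gives
\[
\sum_{n=1}^{N} \frac{1-\cos(tn)}{n^3} \;=\; \frac{t^2}{2}\sum_{n=1}^{N}\frac{1}{n} \;+\; O\!\left(t^{4}\sum_{n=1}^{N} n\right) \;=\; \frac{t^2}{2}\,|\log|t|| \;+\; O(t^2),
\]
using $\sum_{n \leq N} 1/n = \log N + O(1)$, the identity $\log N = |\log|t|| + O(1)$, and the crucial cancellation $t^4 N^2 = O(t^2)$ coming from the choice of $N$. On the tail, $\sum_{n > N} n^{-3} = O(N^{-2}) = O(t^2)$, so that part contributes only to the error. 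Combining the two pieces and multiplying by $2c_1$ yields an expansion of the stated shape. (An equivalent route is to compare the series to $\int_0^\infty (1-\cos tx)/x^3\,dx$ and substitute $u=tx$; I would use the direct summation argument because it sidesteps an additional sum-to-integral estimate.)

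The one genuinely delicate point is bookkeeping: every stray error must be $O(t^2)$ rather than $O(t^2|\log|t||)$. A careless bound on the fourth-order Taylor remainder, or failure to notice that $\log N$ differs from $|\log|t||$ only by a bounded additive constant, would inject a spurious logarithm into the error term and destroy the lemma. Once one verifies that each of the three sources of error (Taylor remainder on the head, constant term in $\sum 1/n$, and tail) is truly $O(t^2)$, the proof is straightforward.
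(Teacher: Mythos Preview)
Your argument is essentially the paper's own: split the cosine series at a cutoff of order $1/|t|$, Taylor-expand $\cos$ on the head, and bound the tail crudely. The paper uses $\varepsilon\lfloor t^{-1}\rfloor$ as the cutoff (with a small fixed $\varepsilon$) and estimates the tail by comparison with $\int_\varepsilon^\infty x^{-3}\cos x\,dx$, while you take $N=\lfloor 1/|t|\rfloor$ and use $|1-\cos|\le 2$; these are cosmetic differences, and your version is if anything cleaner because working with $1-\phi(t)$ and $1-\cos$ avoids a substitution.

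One point you should not gloss over with ``of the stated shape'': if you carry your own constants through, the head contributes
\[
2c_1\cdot\frac{t^2}{2}\sum_{n\le N}\frac{1}{n}\;=\;c_1\,t^2|\log|t||+O(t^2),
\]
so your method yields leading coefficient $c_1$, not $2c_1$. The paper arrives at $2c_1$ because its proof uses the sandwich $1-x^2-x^3<\cos x<1-x^2+x^3$ in place of the correct $\cos x=1-x^2/2+O(x^4)$. Your Taylor expansion is the right one; the factor-of-two discrepancy is a slip in the paper's constant, not a gap in your argument.
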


\begin{proof}
Since the distribution is symmetric, it suffices to prove for $t>0$.
Fix $\varepsilon >0$ such that $1-x^2-x^3 < \cos x < 1-x^2+x^3$ if
$|x| < \varepsilon$. Now, let us consider the decomposition
\[ \phi(t) = \mathbb{E} (\exp (itX)) = \sum_{n=1}^
{\varepsilon \lfloor t^{-1} \rfloor}
\frac{2 c_1}{n^3} \cos (tn) +
\sum_{n = \varepsilon \lfloor t^{-1} \rfloor +1}^{\infty}
\frac{2 c_1}{n^3} \cos (tn) =: S_1 + S_2. \]

It is easy to see that
\[ S_2 =2c_1  \int_{\varepsilon}^{\infty} \frac{\cos x}{x^3} dx  t^2 + o
\left( t^2 \right) = O\left( t^2 \right). \]

On the other hand, since
\[ S_1 = 2 c_1 \sum_{m=t, m \in t\mathbb{Z}}^{\varepsilon}
t^3 m^{-3} \cos m,\]
we have
\[ \left| \frac{S_1}{2 c_1} -
\sum_{m=t, m \in t\mathbb{Z}}^{\varepsilon} t^3 m^{-3}
+ \sum_{m=t, m \in t\mathbb{Z}}^{\varepsilon} t^3 m^{-1} \right|
< \sum_{m=t, m \in t\mathbb{Z}}^{\varepsilon} t^3.\]

Now the estimations
\begin{eqnarray*}
\sum_{m=t, m \in t\mathbb{Z}}^{\varepsilon} t^3 m^{-3} &=& \frac{1}{2 c_1}
+ O \left( \frac{t^2}{\varepsilon ^2} \right)\\
\sum_{m=t, m \in t\mathbb{Z}}^{\varepsilon} t^3 m^{-1} &=&
t^2 log \left( \frac{\varepsilon}{t} \right) + O \left( t^2 \right)
\end{eqnarray*}
and
\[
\sum_{m=t, m \in t\mathbb{Z}}^{\varepsilon} t^3 = O(t^2)\]
finish the proof.

\end{proof}

Now, we turn to the estimation of the remainder term in the one dimensional
local limit theorem.

\begin{theorem}
\label{theoremlocal1}
For the one dimensional HTRW the following estimation holds uniformly
in $x$
\[ \mathbb{P} ( Q_n = x ) - \frac{1}{\sqrt{2 \pi} \sqrt{2 c_1}
\sqrt {n \log n} }
\exp \left( - \frac{x^2}{4 c_1 n \log n} \right) =
O \left( \frac{\log \log n}{\sqrt {n \log^3 n} } \right) \]
\end{theorem}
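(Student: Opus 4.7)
The approach is a Fourier-inversion argument: on the microscale $t \sim (n\log n)^{-1/2}$ one matches the expansion of $\phi$ near $t=0$ given by Lemma \ref{lemma1} against the characteristic function $e^{-c_1 n\log n\, t^2}$ of the target Gaussian, and the non-standard $|\log|t||$ factor in $\phi$ is precisely what produces the $(\log\log n)/\log n$ relative error in the exponent, and hence the extra $\log\log n$ factor in the final bound.

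\textbf{Setup.} Since $Q_n$ is integer-valued and aperiodic ($\mathrm{supp}(X_1)\supset\{\pm 1\}$), Fourier inversion gives
\[
\mathbb{P}(Q_n=x) - \frac{1}{\sqrt{4\pi c_1 n\log n}}\exp\!\left(-\frac{x^2}{4c_1 n\log n}\right) = \frac{1}{2\pi}\int_{-\pi}^{\pi}\bigl[\phi(t)^n - e^{-c_1 n\log n\,t^2}\bigr]e^{-itx}\,dt + O(e^{-cn\log n}),
\]
the error absorbing the Gaussian tail over $|t|>\pi$. Since $|e^{-itx}|=1$, all bounds below are automatically uniform in $x$. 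Fix a small $\eta>0$: on $\eta\le|t|\le\pi$ aperiodicity gives $|\phi(t)|\le 1-\kappa(\eta)<1$, so both $|\phi(t)|^n$ and $e^{-c_1 n\log n\,t^2}$ are exponentially small in $n$; only the range $|t|\le\eta$ is delicate.

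\textbf{Main estimate on $|t|\le\eta$.} Lemma \ref{lemma1} together with $\log(1-u)=-u+O(u^2)$ gives $n\log\phi(t) = -2c_1 n t^2|\log|t|| + O(nt^2)$. Using the algebraic identity $\log n - 2|\log|t||=\log(nt^2)$ I rewrite this as
\[
n\log\phi(t) = -c_1 n\log n\,t^2 + E_n(t),\qquad E_n(t) = c_1 n t^2 \log(nt^2) + O(nt^2).
\]
The change of variables $t=s/\sqrt{n\log n}$ (Jacobian $(n\log n)^{-1/2}$) sends the target exponent to $-c_1 s^2$, and on the inner range $|s|\le(\log n)^{1/4}$ a short computation gives $|E_n|\le Cs^2(\log\log n)/\log n=o(1)$. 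The elementary bound $|e^{-a+b}-e^{-a}|\le|b|\,e^{|b|-a}$ then yields
\[
\bigl|\phi(t)^n - e^{-c_1 n\log n\,t^2}\bigr| \le C\,s^2\, e^{-c_1 s^2/2}\,\frac{\log\log n}{\log n},
\]
and integration against $ds/\sqrt{n\log n}$ returns precisely $O\bigl((\log\log n)/\sqrt{n\log^3 n}\bigr)$, which is the target bound.

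\textbf{Main obstacle.} The principal technical point is the intermediate shell $(\log n)^{1/4}<|s|\le\eta\sqrt{n\log n}$, where $|E_n|$ need not be small and the linearisation above fails. Here I would abandon direct comparison and bound the two integrands separately: the Gaussian piece is trivially at most $e^{-c_1\sqrt{\log n}}$, while Lemma \ref{lemma1} (for $\eta$ sufficiently small) gives $|\phi(t)|\le 1-c_1 t^2|\log|t||$, hence $|\phi(t)|^n\le\exp(-c_1 n t^2|\log|t||)\le e^{-c_1 s^2/4}$ on this shell (using $|\log|t||\ge(\log n)/4$ for large $n$ there). After the Jacobian both contributions are super-polynomially small, well below the target error. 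Navigating this shell, where the Gaussian no longer dominates yet the exponent linearisation has already broken down, is the central delicate step of the argument.
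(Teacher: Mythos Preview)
Your approach is the same as the paper's: Fourier inversion, rescale by $\sqrt{n\log n}$, and split the $s$-axis into an inner window where Lemma~\ref{lemma1} matches $\phi^n$ against the Gaussian, an intermediate shell, and an outer region handled by aperiodicity. The paper uses six pieces rather than three and cites a result of Ibragimov--Linnik for the shell, but the architecture is identical.

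There is one genuine slip in your shell estimate. You assert $|\log|t|| \ge (\log n)/4$ on the entire range $(\log n)^{1/4} < |s| \le \eta\sqrt{n\log n}$, but this fails near the top of the shell: when $|t|$ is close to $\eta$ one has only $|\log|t|| = |\log\eta| = O(1)$, so the conclusion $|\phi(t)|^n \le e^{-c_1 s^2/4}$ breaks down there. The repair is easy: split the shell once more at $|t| = n^{-1/4}$. Below this threshold your inequality $|\log|t|| \ge (\log n)/4$ is valid and your bound goes through; for $n^{-1/4} \le |t| \le \eta$ one has instead $nt^2|\log|t|| \ge |\log\eta|\cdot n^{1/2}$, which already forces $|\phi(t)|^n$ to be super-exponentially small. (The paper sidesteps this by quoting Theorem~4.2.1 of Ibragimov--Linnik, which delivers $|\phi(t)|^n \le e^{-C|s|}$ on all of $|t| < \gamma$ directly.) A smaller second point: your pointwise bound $|E_n| \le Cs^2(\log\log n)/\log n$ on the inner window is not quite right for very small $|s|$, since $c_1 nt^2\log(nt^2) = (s^2/\log n)(2\log|s| - \log\log n)$ contains an extra $s^2|\log s|/\log n$ term; but $\int_0^\infty s^2|\log s|\,e^{-cs^2}\,ds < \infty$, so the integrated error is still $O((\log n)^{-1})$ and the conclusion is unaffected. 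The paper handles this by isolating the sliver $|s| < 1/\log n$ as a separate trivial piece.
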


\begin{proof}
Let $g$ denote the probability density function of the standard Gaussian law.
Then we have
\[ g(z) = \frac{1}{2 \pi} \int_{- \infty}^{\infty} \exp \left(
-izs - \frac{s^2}{2} \right) ds.\]
On the other hand, according to the Fourier inversion formula,
\[ \mathbb{P} ( Q_n = x ) =  \frac{1}{2 \pi} \int_{- \pi}^{\pi}
\exp \left( -itx \right) \phi^n (t) dt.\]
By an elementary argument (see, for example, \cite{Ib-Li})
our result follows from the statement
\begin{equation}
\label{limthm1}
\left|  \sqrt { 2 c_1 n \log n} \frac{1}{2 \pi}\int_{- \pi}^{\pi}
\exp (-itx) \phi^n (t) dt-
g \left( \frac{x}{\sqrt { 2 c_1 n \log n}} \right) \right| = O \left(
 \frac{\log \log n}{\log n} \right),
 \end{equation}
 where the great order on the right hand side is uniform in $x$.
 As it is quite usual in the theory of limit
 theorems (see again \cite{Ib-Li}), we estimate the left
 hand side of (\ref{limthm1}) by the sum of several integrals
 \begin{eqnarray*}
 &&\int_{\frac{1}{\log n} < |s| < \log  n} \left| \phi^n \left(
 \frac{s}{\sqrt { 2 c_1 n \log n} } \right) - \exp \left(
 - \frac{s^2}{2} \right) \right| ds \\
 &+& \int_{|s| < \frac{1}{\log n}} \left| \phi^n \left(
 \frac{s}{\sqrt {2 c_1 n \log n} } \right) \right| ds
 + \int_{|s| < \frac{1}{\log n}} \left|  \exp \left(
 - \frac{s^2}{2} \right) \right| ds \\
 &+& \int_{\log n < |s| < \gamma \sqrt {2 c_1 n \log n} } \left|
 \phi^n \left( \frac{s}{\sqrt {2 c_1 n \log n} } \right) \right| ds \\
 &+& \int_{\gamma \sqrt {2 c_1 n \log n} < |s| < \pi \sqrt {2 c_1 n \log n}}
 \left| \phi^n \left( \frac{s}{\sqrt {2 c_1 n \log n} } \right) \right| ds \\
 &+& \int_{\log n < |s| } \left|  \exp \left(
 - \frac{s^2}{2} \right) \right| ds
 =: I_1 + I_2 +I_3 +I_4 +I_5 +I_6.
 \end{eqnarray*}
So it suffices to prove that $I_j =O \left(
 \frac{\log \log n}{\log n} \right)$, for $j \in \{ 1,2,3,4,5,6 \} $. \\
 For the estimation of $I_1$, observe that for $\frac{1}{\log n} < |s|
 < \log n$ Lemma \ref{lemma1} yields
 \[ \phi^n \left( \frac{s}{\sqrt {2 c_1 n \log n} } \right) =
 \exp \left( - \frac{s^2}{2} \right) \left[ 1 + O \left(
  \frac{s^2 \log \log n}{\log n} \right) \right], \]
  where the great order on the right hand side is uniform in $s$. Hence
  \[ I_1 < \int_{\frac{1}{\log n} < |s| < \log  n}
  s^2 \exp \left( - \frac{s^2}{2} \right) ds O \left(
  \frac{\log \log n}{\log n} \right) = O \left(
  \frac{\log \log n}{\log n} \right).\]
  Further, $|\phi(t)| \leq 1$ yields $I_2 = O \left(
  \frac{\log \log n}{\log n} \right)$ and $I_3 = O \left(
  \frac{\log \log n}{\log n} \right)$ is trivial.
  It can be proven (see Theorem 4.2.1. in \cite{Ib-Li}) that
  there exists $\gamma >0$ such that
  \[ \phi ^n  \left( \frac{s}{\sqrt {2 c_1 n \log n} } \right)
  < \exp \left( -C |s| \right),\]
  with an appropriate $C$ if $|t| < \gamma$.
  This estimation implies $I_4 < O \left(
  \frac{\log \log n}{\log n} \right)$. Observe that $|\phi(t)| \leq 1$
  and $|\phi(t)| = 1$ holds if and only if $t \in 2 \pi \mathbb{Z}$.
  As $|\phi(t)|$ is continuous in $t$, there exists some $C' <1$
  such that $|\phi(t)| < C'$ for $t \in [\gamma, \pi]$. It follows
  that $I_5 < O \left( \frac{\log \log n}{\log n} \right)$. Finally,
  $I_6 < O \left( \frac{\log \log n}{\log n} \right)$ by elementary
  computation. Hence the statement.

  \end{proof}

  Now, we turn to the two dimensional case. Define the two dimensional
  characteristic function $\phi_2: \mathbb{R}^2
  \rightarrow \mathbb{C}$, $\phi_2 (t) = \mathbb{E} (\exp (it' \xi_1))$,
  where $'$ stands for transpose, and write $t=(t_1, t_2)', s=(s_1,s_2)'$.
  Lemma \ref{lemma1} implies that
  \[ \phi_2 (t) = 1- c_1 t_1^2 |\log |t_1|| - c_1 t_2^2 |\log |t_2|| +
  O \left( |t|^2 \right),\]
  as $|t| \rightarrow 0$.
  Similarly to the one dimensional case, the local limit theorem with
  remainder term reads as follows.

\begin{theorem}
\label{theoremlocal2}
For the two dimensional HTRW the following estimation holds uniformly
for $x \in \mathbb{R}^2$
\[ \mathbb{P} ( S_n = x ) - \frac{1}{{2 \pi} 2 c_1 n \log n }
\exp \left( - \frac{| x |^2}{4 c_1 n \log n} \right) =
O \left( \frac{\log \log n}{ {n \log^2 n} } \right) \]

\end{theorem}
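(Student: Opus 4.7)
The plan is to follow the scheme of Theorem \ref{theoremlocal1} almost verbatim, replacing one-dimensional Fourier inversion by its two-dimensional analogue and exploiting the fact that $\phi_2$ is a convex combination of one-dimensional characteristic functions. I would first observe that since $\xi_1 = X_1 E_1$ with $E_1$ uniformly distributed on the four unit axis vectors and $X_1$ symmetric, conditioning on $E_1$ gives
\[ \phi_2(t_1,t_2) = \tfrac{1}{2}\phi(t_1) + \tfrac{1}{2}\phi(t_2). \]
This identity immediately transports Lemma \ref{lemma1} into the two-dimensional expansion recorded just before the theorem and shows, since $\phi=1$ on $[-\pi,\pi]$ only at the origin while $\phi=-1$ is impossible (the conditions $\cos t=\cos 2t=-1$ are incompatible), that $|\phi_2(t)|=1$ on $[-\pi,\pi]^2$ occurs only at $t=0$.

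By Fourier inversion,
\[ \mathbb{P}(S_n = x) = \frac{1}{(2\pi)^2}\int_{[-\pi,\pi]^2} e^{-it\cdot x}\phi_2^n(t)\,dt, \]
and the Gaussian term has the same form with the truncated integral replaced by an integral of $e^{-c_1 n\log n\,|t|^2}$ over $\mathbb{R}^2$. After the rescaling $t = s/\sqrt{2c_1 n\log n}$, as in the one-dimensional case the theorem reduces to showing that
\[ \left|\int \phi_2^n\!\left(\tfrac{s}{\sqrt{2c_1 n\log n}}\right)e^{-is\cdot x/\sqrt{2c_1 n\log n}}\,ds - \int e^{-|s|^2/2}e^{-is\cdot x/\sqrt{2c_1 n\log n}}\,ds\right| = O\!\left(\tfrac{\log\log n}{\log n}\right), \]
uniformly in $x$; dividing by the normalization $(2\pi)^2\cdot 2c_1 n\log n$ then produces the claimed error.

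I would split the $s$-domain into the two-dimensional analogues of the six regions $I_1,\dots,I_6$, using $|s|$ as the splitting parameter. On the core annulus $1/\log n < |s| < \log n$, a routine exponentiation of the expansion of $\phi_2$ yields $\phi_2^n(s/\sqrt{2c_1 n\log n}) = e^{-|s|^2/2}[1 + O(|s|^2\log\log n/\log n)]$, integrable against $|s|^2 e^{-|s|^2/2}\,ds$ on $\mathbb{R}^2$. The tiny disk $|s| < 1/\log n$ has area $O(1/\log^2 n)$ with both integrands bounded, and the Gaussian tail $|s| > \log n$ is super-polynomially small, so both are comfortably within budget.

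The main obstacle is the intermediate regime $\log n < |s| < \gamma\sqrt{2c_1 n\log n}$ and the outer regime $\gamma\sqrt{2c_1 n\log n} < |s| < \pi\sqrt{2c_1 n\log n}$, where the Taylor expansion is no longer useful. Here the identity $\phi_2 = (\phi(t_1)+\phi(t_2))/2$ is essential: whenever $|\phi(t_i)| \le 1-\eta$ for at least one $i$ one has $|\phi_2(t)| \le 1-\eta/2$, so the exponential decay bounds obtained in the proof of Theorem \ref{theoremlocal1} transfer directly to that subregion. When both $|t_1|$ and $|t_2|$ are simultaneously small, the two-dimensional lower bound $1-\phi_2(t) \ge c(t_1^2|\log|t_1||+t_2^2|\log|t_2||) \ge c'|t|^2\log(1/|t|)$ extracted from Lemma \ref{lemma1} yields $|\phi_2^n(s/\sqrt{2c_1 n\log n})| \le \exp(-C|s|^2)$, far more than enough. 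Finally on the outermost region, compactness of $[-\pi,\pi]^2\setminus B(0,\gamma')$ together with $|\phi_2|<1$ there produces $|\phi_2^n| \le (C')^n$, trivially absorbing the polynomial volume $O(n\log n)$.
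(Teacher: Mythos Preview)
Your overall architecture is the same as the paper's: Fourier inversion, rescale by $\sqrt{2c_1 n\log n}$, reduce to an $O(\log\log n/\log n)$ bound on the difference of integrals, and split into six regions. Two points deserve comment.

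\textbf{Treatment of the intermediate region $I_4$.} This is where your argument genuinely diverges from the paper's. The paper does \emph{not} use the product structure or Lemma~\ref{lemma1} directly here; instead it invokes Rvaceva's scaling relation $\Re\log\phi_2(at)/\Re\log\phi_2(t)\to a^2$, performs an $e$-adic decomposition $e^m<|s|<e^{m+1}$, and sums the resulting geometric-type series. Your route---using the explicit identity $\phi_2=\tfrac12(\phi(t_1)+\phi(t_2))$ together with the Lemma~\ref{lemma1} lower bound $1-\phi_2(t)\ge c'|t|^2\log(1/|t|)$---is more elementary and exploits the special structure of this walk, whereas the paper's argument would survive for any distribution in the domain of attraction. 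One caution: your asserted bound $|\phi_2^n(s/\sqrt{2c_1 n\log n})|\le\exp(-C|s|^2)$ with a fixed $C$ is too strong near the outer edge $|s|\sim\gamma\sqrt{2c_1 n\log n}$, since there $\log(1/|t|)$ is only $O(1)$; what you actually obtain is $\exp(-C|s|^2\log(1/|t|)/\log n)$, which is still easily integrable over the annulus and yields $I_4=o(1/\log n)$.

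\textbf{Decomposition near the origin.} The paper uses the rectangular region $\{1/\log^3 n<|s_1|,|s_2|<\log n\}$ rather than your radial annulus $\{1/\log n<|s|<\log n\}$, precisely so that each $|\log|s_i||$ is $O(\log\log n)$ on the core; the thin strips $|s_i|<1/\log^3 n$ then have area $O(1/\log^2 n)$. Your radial version also works, but on your annulus one coordinate may be arbitrarily small, so the error in the exponent picks up an extra bounded term $O(s_i^2|\log|s_i||/\log n)=O(1/\log n)$ coming from $\sup_{0<u<1}u^2|\log u|<\infty$; this is harmless after integration but should be mentioned.
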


\begin{proof}
The proof is similar to the proof of Theorem \ref{theoremlocal1}.
Let $g$ denote the probability density function of the
two dimensional standard Gaussian law.
Then we have
\[ g(z) = \frac{1}{(2 \pi)^2} \int_{- \infty}^{\infty}
\int_{- \infty}^{\infty}
\exp \left(
-it'z - \frac{t't}{2} \right) dt.\]
On the other hand, according to the Fourier inversion formula
\[ \mathbb{P} ( S_n = x ) =  \frac{1}{(2 \pi)^2} \int_{- \pi}^{\pi}
\int_{- \pi}^{\pi}
\exp \left( -it'x \right) \phi_2^n (t) dt.\]
Just like previously, it is enough to prove that
\begin{equation}
\label{limthm2}
\left|  2 c_1 n \log n \frac{1}{(2 \pi)^2}\int_{- \pi}^{\pi}
\int_{- \pi}^{\pi} \exp (-it'x) \phi_2^n (t) dt-
g \left( \frac{x}{\sqrt { 2 c_1 n \log n}} \right) \right|
  \end{equation}
 is in $O \left(  \frac{\log \log n}{\log n} \right)$. 
 The analogue of the previous decomposition in the present case is
 \begin{eqnarray*}
  &&\int_{\frac{1}{\log^3 n} < |s_1|,|s_2| < \log  n} \left| \phi_2^n \left(
   \frac{s}{\sqrt { 2 c_1 n \log n} } \right) - \exp \left(
    - \frac{s's}{2} \right) \right| ds \\
     &+& 2 \int_{|s_1| < \frac{1}{\log^3 n}
      \& |s_2| < \log n} \left| \phi_2^n \left(
       \frac{s}{\sqrt {2 c_1 n \log n} } \right) \right| ds \\
        &+& \int_{|s_1| < \frac{1}{\log^3 n}
	  \& |s_2| < \log n}\left|  \exp \left(
	   - \frac{s's}{2} \right) \right| ds \\
	    &+& \int_{\log  n < |s| < \gamma \sqrt {2 c_1 n \log n} } \left|
	     \phi_2^n \left( \frac{s}{\sqrt {2 c_1 n \log n} } \right) \right| ds \\
	      &+& \int_{\gamma \sqrt {2 c_1 n \log n} < |s| < \pi \sqrt {2 c_1 n \log n}}
	       \left| \phi_2^n \left( \frac{s}{\sqrt {2 c_1 n \log n} } \right) \right| ds \\
	        &+& \int_{\log  n < |s| } \left|  \exp \left(
		 - \frac{s's}{2} \right) \right| ds
		  =: I_1 + I_2 +I_3 +I_4 +I_5 +I_6.
		   \end{eqnarray*}
		    So it suffices to prove that $I_j =O \left(
		      \frac{\log \log n}{\log n} \right)$, for $j \in \{ 1,2,3,4,5,6 \} $. \\
		      All the above integrals can be estimated as it was done in the proof of
		      Theorem \ref{theoremlocal1} except for $I_4$. For the latter, we adapt
		      the argument of Rvaceva (see \cite{Rv}).
		      It is easy to see that
		      \[ \frac{\Re \log \phi_2 (a t)}{\Re \log \phi_2 (t)} \rightarrow a^2 \]
		      as $|t| \rightarrow 0$
		      (here $\Re$ denotes real part). Hence, for $\gamma$ small enough,
		      \[ \Re \log \phi_2 (t) > e \Re \log \phi_2 (t/e) \]
		      holds for $|t| < \gamma$.
		      Now, pick $k \in \mathbb{N}$ such that
		      $\exp(k) \leq \gamma \sqrt {2 c_1 n \log n} < \exp(k+1)$ and write
		      \begin{eqnarray*}
		      &I_4& \leq \sum_{m= \log \log n}^{k} \int_{ \exp (m)<|s|< \exp (m+1)}
		      \left| \phi_2^n \left( \frac{s}{\sqrt {2 c_1 n \log n} } \right) \right|
		      ds \\
		      && <  \sum_{m= \log \log n}^{k} \exp(2m) \int_{ 1<|s|< e} \exp \left(
		      n \exp(m) \Re \log
		      \phi_2 \left( \frac{s}{\sqrt {2 c_1 n \log n} } \right) \right) ds.
		      \end{eqnarray*}
		      The argument used in the estimation of $I_1$ implies that
		      \[ n \Re \log \phi_2 \left( \frac{s}{\sqrt {2 c_1 n \log n} } \right) =
		      -\frac{|s|^2}{2} +o(1)\]
holds uniformly for $s \in [1,e]$, whence for some $C'<1$
\[ I_4 < \sum_{m= \log \log n}^{k} \exp(2m) (e-1) C'^{ \exp(m)}.\]
So we proved $I_4 =O (\frac{1}{\log n})$, hence the statement.

\end{proof}

\section{Recurrence properties}
\label{secrecurrence}

In this section we discuss the recurrence properties of $S_n$ and $Q_n$ that 
are supposed to be important in the case of billiards, too (note that these
are analogous to the ones considered in \cite{D-D-K}). For SSRW, these kind
of results were proven in \cite{Er-Ta} and \cite{Da-Ka}. We begin with the
two dimensional case.

\begin{definition}
Let $\tau_2$ be the first return to the origin in two dimensions, i.e.
\[ \tau_2 = \min \{ n > 0: S_n = (0,0) \} \]
\end{definition}

\begin{theorem}
\label{tetel1}
$\mathbb{P} (\tau_2 >n) \sim \frac{4 \pi c_1}{  \log \log n}$
\end{theorem}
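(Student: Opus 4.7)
I would follow the classical renewal-theoretic route of Erd\H{o}s--Taylor type. Set $R_n := \mathbb{P}(\tau_2 > n)$ and $U_n := \sum_{k=0}^n u_2(k)$. Partitioning over the \emph{last} time $k \in [0,n]$ with $S_k = 0$ and using the Markov property yields the identity
\[
\sum_{k=0}^n u_2(k)\, R_{n-k} \;=\; 1, \qquad n \geq 0.
\]
The goal thus reduces to two steps: (i) compute the asymptotic of $U_n$ from Theorem \ref{theoremlocal2}, and (ii) transfer this into an asymptotic for $R_n$ via a monotonicity/Tauberian argument.

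For step (i), evaluating Theorem \ref{theoremlocal2} at $x = 0$ gives
\[
u_2(n) \;=\; \frac{1}{4\pi c_1\, n \log n} + O\!\left(\frac{\log \log n}{n \log^2 n}\right).
\]
The leading term sums as $\sum_{k=2}^n (k \log k)^{-1} = \log \log n + O(1)$, while the substitution $u = \log k$ shows that $\sum_{k \geq 2} \frac{\log \log k}{k \log^2 k}$ converges. Therefore
\[
U_n \;=\; \frac{\log \log n}{4 \pi c_1} + O(1),
\]
which in particular diverges, so $S_n$ is recurrent and $R_n \to 0$.

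For step (ii), since $R_n$ is monotone decreasing, the renewal identity immediately yields $R_n U_n \leq 1$. For the matching lower bound I fix $\alpha \in (0,1)$ and split the sum at $\lfloor \alpha n \rfloor$; bounding $R_{n-k} \leq R_{(1-\alpha)n}$ on the first block and $R_{n-k} \leq 1$ on the second gives
\[
1 \;\leq\; R_{(1-\alpha)n}\, U_{\lfloor \alpha n \rfloor} \;+\; \bigl( U_n - U_{\lfloor \alpha n \rfloor} \bigr).
\]
The asymptotic from step (i) yields $U_n - U_{\alpha n} \to 0$ and $U_{\alpha n}/U_{(1-\alpha)n} \to 1$ (slow variation of $\log \log$), so $R_{(1-\alpha)n}\, U_{(1-\alpha)n} \to 1$. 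Since $\alpha$ is arbitrary, this forces $R_n U_n \to 1$, i.e.\ $R_n \sim 4 \pi c_1/\log \log n$.

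The principal obstacle is not the Tauberian step itself but guaranteeing that the error in the local limit theorem is \emph{summable}: without that, an uncontrolled $O(\log \log n)$ contribution could enter $U_n$ and shift the asymptotic constant. This is exactly the role played by the remainder estimate in Theorem \ref{theoremlocal2}, whose summability over $n$ is the crux of the whole argument.
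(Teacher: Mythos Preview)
Your argument is correct and is exactly the Dvoretzky--Erd\H{o}s renewal approach the paper invokes (the identity $\sum_{k\le n} u_2(k)\,R_{n-k}=1$ together with the local limit theorem and monotonicity of $R_n$), so there is nothing to add on the method. One correction to your closing remark: the summable remainder from Theorem~\ref{theoremlocal2} is \emph{not} actually needed for this statement --- since $\sum_k (k\log k)^{-1}$ diverges, the bare asymptotic $u_2(n)\sim (4\pi c_1\, n\log n)^{-1}$ from~(\ref{limit2dim}) already forces both $U_n\sim (4\pi c_1)^{-1}\log\log n$ and $U_n-U_{\alpha n}\to 0$, which is all your Tauberian step uses; the remainder estimate becomes essential only in the later hitting-time theorem (cf.\ Remark~\ref{remark}\!\!\! in the paper, or rather the remark following it).
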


\begin{theorem}
\label{thm:rec2}
Let $N_2^n = \# \{k \leq n : S_k = (0,0) \}$. 
Then
\[ \frac{ N_2^n}{ \log \log n} \]
converges to an exponential random variable with expected value 
$\frac{1}{4 \pi c_1}$.
\end{theorem}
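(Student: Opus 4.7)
The plan is to apply the method of factorial moments. Since the exponential distribution is uniquely determined by its moments (its moment generating function is finite in a neighbourhood of the origin), it suffices to prove
\[
\mathbb{E}\binom{N_2^n}{k} \sim \left(\frac{\log\log n}{4\pi c_1}\right)^k \qquad (n \to \infty)
\]
for every fixed $k \geq 1$. Converting factorial to ordinary moments via the identity $x^k = \sum_{\ell=1}^{k} S(k,\ell)\, \ell!\, \binom{x}{\ell}$ (where $S(k,\ell)$ is a Stirling number of the second kind), the leading $\ell=k$ term would then give $\mathbb{E}[(N_2^n)^k] \sim k!\,(\log\log n / (4\pi c_1))^k$, which matches the $k$-th moment of $\mathrm{Exp}(1/(4\pi c_1))$, so convergence in distribution follows from Fr\'echet--Shohat.

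The factorial moment has a clean form via the Markov property. Writing $N_2^n = \sum_{j=1}^{n} \mathbf{1}[S_j = (0,0)]$ and expanding,
\[
\mathbb{E}\binom{N_2^n}{k} \;=\; \sum_{1 \le j_1 < \cdots < j_k \le n} u_2(j_1)\, u_2(j_2 - j_1) \cdots u_2(j_k - j_{k-1}) \;=\; \sum_{\substack{d_1,\ldots,d_k \ge 1\\ d_1 + \cdots + d_k \le n}} \prod_{i=1}^{k} u_2(d_i).
\]
Setting $U_n := \sum_{d=1}^{n} u_2(d)$, the last sum is trivially bounded above by $U_n^k$; restricting to $d_i \le n/k$ for each $i$, so that the constraint $\sum d_i \le n$ is automatic, yields the lower bound $U_{n/k}^k$. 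I would then invoke Theorem \ref{theoremlocal2} with $x=0$, which gives
\[
u_2(m) \;=\; \frac{1}{4\pi c_1\, m \log m} + O\!\left(\frac{\log\log m}{m \log^2 m}\right),
\]
and sum using the comparison $\int_2^n \frac{dm}{m \log m} \sim \log\log n$ to conclude $U_n \sim \log\log n / (4\pi c_1)$. Since $k$ is fixed and $\log\log(n/k) \sim \log\log n$, the same asymptotic holds for $U_{n/k}$, so the sandwich closes and the desired factorial-moment asymptotic is obtained.

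The main obstacle, which is really bookkeeping rather than conceptual, is controlling $U_n$ on the initial range of $m$ where Theorem \ref{theoremlocal2} gives no useful information. However, $u_2(m) \le 1$ trivially, so for any fixed $M$ the segment $\sum_{m \le M} u_2(m)$ is bounded by a constant and is absorbed into the $o(\log\log n)$ error. Once this is handled, nothing beyond the local limit theorem of Section \ref{seclimit} is needed: in particular, Theorem \ref{tetel1} enters only implicitly through $u_2$, and the Darling--Kac type machinery is replaced here by a short, self-contained moment computation.
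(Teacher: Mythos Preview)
Your argument is correct and is precisely the classical Erd\H{o}s--Taylor factorial-moment computation that the paper points to when it says the result follows by ``combining the original proofs (see \cite{Dv-Er} and \cite{Er-Ta}) with (\ref{limit2dim}).'' One small remark: you invoke Theorem~\ref{theoremlocal2} with its remainder bound, but for the asymptotic $U_n \sim (4\pi c_1)^{-1}\log\log n$ only the bare relation $u_2(m)\sim (4\pi c_1\, m\log m)^{-1}$ from (\ref{limit2dim}) is needed, exactly as the paper indicates.
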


Theorem \ref{tetel1} and Theorem \ref{thm:rec2} can be easily proven 
combining the original proofs 
(see \cite{Dv-Er} and \cite{Er-Ta}) with (\ref{limit2dim}).\\

\begin{definition}
Let $t_v$ be the hitting time of the origin, starting from the site
$v \in \mathbb{Z}^2$, i.e.
\[ t_v = \min \{ k \geq 0: S_k =(0,0) | S_0 = v\}.\]
\end{definition}

The following recurrence property is less known but is of crucial
importance in the argument of \cite{D-D-K2}.

\begin{theorem}
\[ \frac{\log \log t_v}{\log \log | v | } \Rightarrow \frac{1}{U} \]
as $|v| \rightarrow \infty$,
where $U$ is uniformly distributed on $[0,1]$ and $\Rightarrow$ 
stands for weak convergence.
\end{theorem}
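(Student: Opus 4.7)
The plan is to reduce the weak convergence to the sharp tail asymptotic
\[ \mathbb{P}(t_v > n) = \frac{\log\log|v|}{\log\log n}(1+o(1)), \]
valid whenever $|v|\to\infty$ with $\log\log n/\log\log|v| \to y > 1$. Once this is in hand, setting $n_v := \exp((\log|v|)^y)$ so that $\log\log n_v = y\log\log|v|$ yields $\mathbb{P}(\log\log t_v/\log\log|v| > y) \to 1/y$, which is precisely the tail of $1/U$ for $U$ uniform on $[0,1]$. The case $y \leq 1$ is handled separately: $n_v$ grows more slowly than any positive power of $|v|$, so the Gaussian factor in Theorem \ref{theoremlocal2} suppresses the diffusive part of $\mathbb{P}(S_k=0\mid S_0=v)$, and a direct heavy-tail bound for a single large jump gives $\sum_{k\leq n_v}\mathbb{P}(S_k = 0\mid S_0=v) = O(n_v^2/|v|^3) + o(1)\to 0$, hence $\mathbb{P}(t_v \leq n_v)\to 0$ by Markov.

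The core step is the asymptotic of the truncated Green's function $G_v^{(n)} := \sum_{k=0}^n \mathbb{P}(S_k = 0 \mid S_0 = v)$. I would apply Theorem \ref{theoremlocal2} to write
\[ \mathbb{P}(S_k = 0 \mid S_0 = v) = \frac{1}{4\pi c_1 k \log k}\exp\!\left(-\frac{|v|^2}{4c_1 k \log k}\right) + O\!\left(\frac{\log\log k}{k\log^2 k}\right). \]
The error term is summable (since $\int_2^\infty dk/(k\log^2 k)$ converges), so contributes $O(1)$ uniformly in $v$. For the main term, split at the crossover scale $k_v$ where $k_v\log k_v \sim |v|^2$, so $k_v \sim |v|^2/\log|v|$: for $k\ll k_v$ the exponential is superpolynomially small, while for $k\gg k_v$ it is close to $1$. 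Summing yields $G_v^{(n)} = \frac{1}{4\pi c_1}[\log\log n - \log\log|v|] + o(1)$ for $n\gg k_v$. In particular $U(n):=G_0^{(n)} \sim \log\log n/(4\pi c_1)$, recovering the denominator behind Theorem \ref{tetel1}.

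Next I would convert Green's function estimates into hitting probability bounds via the first-passage decomposition $\mathbb{P}(S_k = 0 \mid S_0 = v) = \sum_{j \leq k}\mathbb{P}(t_v=j)\,u_2(k-j)$. Summing over $k\leq n$ gives
\[ G_v^{(n)} = \sum_{j=0}^n \mathbb{P}(t_v = j)\, U(n-j). \]
Monotonicity of $U$ immediately yields $G_v^{(n)} \leq U(n)\mathbb{P}(t_v\leq n)$, hence the lower bound $\mathbb{P}(t_v\leq n) \geq G_v^{(n)}/U(n) \to 1 - 1/y$. For the matching upper bound, I would take $n':=n\cdot(\log n)^2$, so $\log\log n' = \log\log n + o(1)$ and therefore $U(n'-n) \sim U(n') \sim U(n)$ and $G_v^{(n')}\sim G_v^{(n)}$. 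From the same decomposition, but keeping only the terms with $j\leq n$, $G_v^{(n')} \geq U(n'-n)\mathbb{P}(t_v\leq n)$, which yields $\mathbb{P}(t_v\leq n) \leq G_v^{(n')}/U(n'-n) \to 1 - 1/y$.

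The main obstacle is establishing the asymptotic of $G_v^{(n)}$ uniformly across three regimes: (i) the small-$k$ region where the local limit theorem is not informative and one must use a crude large-deviation bound exploiting the explicit tails $\mathbb{P}(X_1 = n)\propto |n|^{-3}$ (contributing at most $O(n^2/|v|^3)$); (ii) the crossover $k\sim k_v$ where the Gaussian factor varies rapidly and must be integrated carefully; (iii) the large-$k$ region where the LLT error term must be summed against the leading $1/(k\log k)$ density. Reconciling these three regimes so that the leading $\log\log$ asymptotic survives with an error genuinely $o(1)$, uniformly in the joint regime $|v|\to\infty$ and $\log\log n \sim y\log\log|v|$, is the delicate part of the argument.
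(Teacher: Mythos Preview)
Your approach is correct and reaches the same conclusion, but it follows a genuinely different decomposition from the paper's. The paper adapts the Erd\H{o}s--Taylor argument via the two \emph{last-visit} renewal identities
\[
\sum_{i=0}^n u_2(i)\gamma(n-i)=1,\qquad \mathbb{P}\bigl(\zeta(x_n,n)=0\bigr)+\sum_{i=1}^n \mathbb{P}(S_i=x_n)\gamma(n-i)=1,
\]
subtracting them to obtain $\mathbb{P}(\zeta(x_n,n)=0)-\gamma(n)=\sum_{i=1}^n\bigl(u_2(i)-\mathbb{P}(S_i=x_n)\bigr)\gamma(n-i)$, and then bounding this single sum above and below using Theorem~\ref{theoremlocal2} and Theorem~\ref{tetel1}. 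You instead use the \emph{first-passage} decomposition $G_v^{(n)}=\sum_{j\le n}\mathbb{P}(t_v=j)\,U(n-j)$ together with monotonicity of $U$ and a dilation $n'=n\log^2 n$ to sandwich $\mathbb{P}(t_v\le n)$ between $G_v^{(n)}/U(n)$ and $G_v^{(n')}/U(n'-n)$. The paper's route is a bit more economical in that upper and lower bounds come from the same sum without the dilation trick, and the $y\le 1$ regime falls out automatically from the subtraction; your route has the advantage of isolating the truncated Green's function as the main analytic object, which makes the $\log\log$ structure more transparent and generalizes readily to other walks once $G_v^{(n)}$ is known.

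One small correction that does not affect the argument: your Green's function asymptotic should read $G_v^{(n)}=\frac{1}{4\pi c_1}(\log\log n-\log\log|v|)+O(1)$ rather than $+\,o(1)$, since both the summed LLT error $\sum_k O(\log\log k/(k\log^2 k))$ and the crossover region $k\sim k_v$ contribute bounded but not vanishing constants. This is harmless because these $O(1)$ terms disappear in the ratio $G_v^{(n)}/U(n)$ as $\log\log n\to\infty$.
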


\begin{proof}
We adapt the proof of \cite{Er-Ta}. Let
\[ \zeta (x,n) = \# \{ 1 \leq k \leq n: S_k = x\} \]
be the local time of the walk at site $x$ up to time $n$ and
\[ \gamma(n) = \mathbb{P} \left( \tau_2 > n \right).\]
Further, we will need the estimation on the remainder term of
the local limit theorem. More precisely, we will use the following 
estimation
\begin{equation}
\label{nehezminuszegy}
\mathbb{P} \left( S_n = y \right) = \frac{1}{4 \pi c_1 n \log n} -
|y|^2 O \left( \frac{1}{n^2 \log^2 n} \right) + O
 \left( \frac{\log \log n}{n \log^2 n} \right),
\end{equation}
where the great orders are uniform in $\{ y: |y| < \sqrt {n \log n} \}$.
Note that (\ref{nehezminuszegy}) is a consequence of Theorem 
\ref{theoremlocal2}.
We are going to prove the following assertion. \\
If we choose $x_n \in \mathbb{Z}^2$ such that 
\[|x_n| \sim \exp \left( \frac{1}{2} \log ^\delta n \right) \]
for some fix $0< \delta <1 $, then
\begin{equation}
\label{nehez}
\mathbb{P} \left( \zeta \left( x_n, n\right) = 0\right) \rightarrow
\delta,
\end{equation}
as $n \rightarrow \infty$.
It is easy to see that (\ref{nehez}) implies the statement of the theorem.\\
As in \cite{Er-Ta}, we consider the identities
\begin{equation}
\label{nehezegy}
\sum_{i=0}^n u_2(i) \gamma(n-i) =1
\end{equation}
and
\begin{equation}
\label{nehezketto}
\mathbb{P} \left( \zeta \left( x_n, n\right) = 0\right) 
+ \sum_{i=1}^n \mathbb{P} \left( S_i = x_n \right)  \gamma(n-i)=1.
\end{equation}
Combining (\ref{nehezegy}) and (\ref{nehezketto}) we obtain
\begin{equation}
\label{nehezharom}
\mathbb{P} \left( \zeta \left( x_n, n\right) = 0\right) - \gamma(n) =
\sum_{i=1}^n \left( u_2(i) -
\mathbb{P} \left( S_i = x_n \right) \right)  \gamma(n-i).
\end{equation}
Using the fact that $\gamma$ is monotonic, Theorem \ref{tetel1} and 
the estimation (\ref{nehezminuszegy}) we conclude that 
the right hand side of (\ref{nehezharom}) is smaller than
\begin{eqnarray*}
&&\frac{4 \pi c_1 + o(1)}{\log \log n} \sum_{k=1}^{
\exp \left( \log^{\delta} n\right)} \frac{1}{4 \pi c_1 k \log k}\\
&+& \frac{4 \pi c_1 + o(1)}{\delta \log \log n} \sum_{ k=
\exp \left( \log^{\delta} n\right)}^{\sqrt n} |x_n|^2 O
\left( \frac{1}{k^2 \log^2 k} \right)
+ \sum_{k = \sqrt n}^{n} |x_n|^2 O \left( \frac{1}{k^2 \log^2 k} \right)\\
&+& \sum_{ k=
\exp \left( \log^{\delta} n\right)}^{ \infty} O\left(
\frac{\log \log k}{k \log^2 k}\right) = \delta + o(1).
\end{eqnarray*}
So we arrived at the upper bound. For the lower bound define 
\[k_1 = \frac{\exp \left( \log^{\delta} n \right)}{\log n}. \]
Theorem \ref{tetel1} and Theorem \ref{theoremlocal2} imply that the right 
hand side of (\ref{nehezharom}) is bigger than
\begin{eqnarray*}
&&\gamma(n) \sum_{k=1}^{k_1} \left[ u_2(k)- \mathbb{P}
(S_k = x_n) \right] \geq \\
&&\frac{4 \pi c_1 + o(1)}{\log \log n} \sum_{k=1}^{k_1} \Big[
\frac{1}{4 \pi c_1 k \log k} + O\left(
\frac{\log \log k}{k \log^2 k}\right) \Big] \\ 
&&+ \frac{4 \pi c_1 + o(1)}{\log \log n} \sum_{k=1}^{k_1} \Big[
- \frac{1}{4 \pi c_1 k \log k}
\exp \left( - \frac{|x_n|^2}{4 c_1 k \log k} \right) \Big] \\
&>& \delta +o(1) + \frac{O(1)}{\log \log n} 
- O\left( \frac{1}
{\log \log n} \right) \exp \left( - \frac{|x_n|^2}{k_1 \log k_1}\right)
\sum_{k=1}^{k_1} \frac{1}{k \log k} \\
&>& \delta + o(1).
\end{eqnarray*}
Thus we have proved (\ref{nehez}). The statement follows.
\end{proof}

\begin{remark}
Note that for the adaptation of the Erd\H{o}s-Taylor type argument for our
setting, the summability of the remainder term in the local limit theorem -
i.e. Theorem \ref{theoremlocal2} -
was essential. The situation was basically the same in \cite{Na}, however,
in a different context.
\end{remark}

It would be interesting to find an intuitive reason for the appearance of
the exponential and the uniform distributions as limit laws. However, neither
Erd\H{o}s and Taylor gave explanation in \cite{Er-Ta}, nor the present
author can give any. Now, we turn to the one dimensional case.

\begin{definition}
Let $\tau_1$ be the first return to the origin in one dimension, i.e.
\[ \tau_1 = \min \{ n > 0: Q_n = 0 \} \]
\end{definition}

\begin{theorem}
\label{tail1dim}
$\mathbb{P} (\tau_1 >n) \sim \frac{2 \sqrt {c_1}}{ \sqrt {\pi}} 
\sqrt{ \frac{\log n}{n} }$
\end{theorem}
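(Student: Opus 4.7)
The plan is to follow the pattern behind Theorem \ref{tetel1}, using the one dimensional local limit theorem (Theorem \ref{theoremlocal1}) together with the last-visit decomposition at the origin. Conditioning on the largest $k \le n$ with $Q_k = 0$ (which exists because $Q_0 = 0$), one obtains the renewal identity
\[
\sum_{k=0}^n u_1(k) \, \gamma_1(n-k) = 1, \qquad \gamma_1(m) := \mathbb{P}(\tau_1 > m),
\]
valid for every $n \ge 0$. Theorem \ref{theoremlocal1} supplies the input $u_1(n) \sim (2\sqrt{\pi c_1 n \log n})^{-1}$, which is precisely what the left-hand side needs.

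The first substantial step is to pass to generating functions. Setting $U(z) = \sum_{n \ge 0} u_1(n) z^n$ and $G(z) = \sum_{n \ge 0} \gamma_1(n) z^n$, the identity above becomes $U(z) G(z) = 1/(1-z)$. Writing $u_1(n) \sim n^{-1/2} L(n)/\Gamma(1/2)$ with the slowly varying factor $L(n) = 1/(2\sqrt{c_1 \log n})$, Karamata's Abelian theorem gives
\[
U(z) \sim \frac{1}{2\sqrt{c_1(1-z)\log(1/(1-z))}} \quad \text{as } z \to 1^-,
\]
and consequently
\[
G(z) \;=\; \frac{1}{(1-z)U(z)} \;\sim\; 2\sqrt{c_1}\,(1-z)^{-1/2}\sqrt{\log(1/(1-z))}.
\]

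The final step is to invert this asymptotic back for $\gamma_1(n)$. Since $\gamma_1$ is non-increasing, the Tauberian direction of Karamata's theorem applies directly: with $\rho = 1/2$ and slowly varying factor $\widetilde L(n) = 2\sqrt{c_1 \log n}$ one concludes
\[
\gamma_1(n) \;\sim\; \frac{\widetilde L(n)}{\Gamma(1/2)\sqrt{n}} \;=\; \frac{2\sqrt{c_1}}{\sqrt{\pi}}\,\sqrt{\frac{\log n}{n}},
\]
which is the claim.

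The main obstacle is the bookkeeping of the slowly varying factor $\sqrt{\log(1/(1-z))}$ through both Karamata transforms: one has to verify the slow variation and monotonicity hypotheses rather than manipulate asymptotics purely formally. A parallel, purely summatory approach closer in spirit to the earlier proof about $t_v$ would substitute $k = un$ in the convolution and exploit that $\log(un)/\log n \to 1$ uniformly away from the endpoints, reducing the identity to the Beta integral $\int_0^1 du/\sqrt{u(1-u)} = \pi$; this again pins down the same constant $2\sqrt{c_1/\pi}$, and has the advantage that the remainder estimate of Theorem \ref{theoremlocal1} can be fed in directly to control the error.
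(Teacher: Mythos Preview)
Your proposal is correct and follows essentially the same route as the paper's own proof: both derive the renewal identity $\sum_{k=0}^n u_1(k)\gamma_1(n-k)=1$, pass to generating functions $U(z)G(z)=1/(1-z)$, apply the Abelian direction of Karamata/Feller's Tauberian theorem to obtain $U(z)\sim (2\sqrt{c_1(1-z)\log(1/(1-z))})^{-1}$, invert to find $G(z)$, and then use the monotonicity of $\gamma_1$ to run the Tauberian direction back. The constants and the required hypotheses match.
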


\begin{proof}
Theorem \ref{tail1dim} can be easily proven by the usual way. One has
to consider the renewal equation
\[ \sum_{k=0}^n u_1(k) \mathbb{P} (\tau_1 >n-k) = 1, \]
and the identity
\[ U(x)V(x) = \frac{1}{1-x},\]
where
\begin{eqnarray*}
U(x) &=& \sum_{k=0}^{\infty} u_1(k)x^k \\
V(x) &=& \sum_{k=0}^{\infty} \mathbb{P} (\tau_1 >k)x^k.
\end{eqnarray*}
Now, the well known Tauberian theorem (Theorem XIII.5. in \cite{Fe})
implies that
\[ U(x) \sim \frac{1}{\sqrt{1-x}} \frac{1}{\sqrt{\pi c_1}} \Gamma
\left( \frac{3}{2} \right) \frac{1}{\sqrt{ \log \frac{1}{1-x}}}\]
as $x \rightarrow 1$, thus
\[ V(x) \sim \frac{1}{\sqrt{1-x}} \frac{\sqrt{\pi c_1}}{\Gamma
\left( \frac{3}{2} \right)} \sqrt{\log \frac{1}{1-x}}\]
as $x \rightarrow 1$. Since $\mathbb{P} (\tau_1 >n)$ is monotonic in $n$,
the previous Tauberian theorem infers the statement.
\end{proof}

\begin{theorem}
Let $N_1^n = \# \{k \leq n : Q_k = 0 \}$. Then
\[ \frac{N_1^n \sqrt {\log n}}{  \sqrt n} \]
converges to a Mittag-Leffler distribution with parameters $1/2$
and $(2 \sqrt{ c_1})^{-1}$, i.e. to the distribution, the $k^{th}$ moment
of which is 
\[\frac{1}{(2 \sqrt{ c_1})^k} \frac{k!}{
\Gamma \left( \frac{k}{2} +1\right)}. \]
\end{theorem}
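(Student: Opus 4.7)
The plan is to use the method of moments, since the Mittag-Leffler distribution is determined by its moments (the sequence $m_k = k!/((2\sqrt{c_1})^k\Gamma(k/2+1))$ satisfies Carleman's criterion: Stirling gives $m_k^{1/(2k)} = O(k^{1/4})$, so $\sum m_k^{-1/(2k)}$ diverges). Thus it is enough to show that, for every fixed $k \geq 1$,
\[
\mathbb{E}\!\left[\left(\frac{N_1^n \sqrt{\log n}}{\sqrt n}\right)^{\!k}\right] \longrightarrow \frac{k!}{(2\sqrt{c_1})^k\,\Gamma(k/2+1)}.
\]

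The workhorse is the standard factorial-moment identity, which follows from the Markov property applied at successive hits of $0$:
\[
\mathbb{E}\bigl[N_1^n(N_1^n-1)\cdots(N_1^n-k+1)\bigr] \;=\; k!\sum_{0<j_1<\cdots<j_k\le n} u_1(j_1)\,u_1(j_2-j_1)\cdots u_1(j_k-j_{k-1}).
\]
Since $\mathbb{E}[(N_1^n)^k]-\mathbb{E}[N_1^n(N_1^n-1)\cdots(N_1^n-k+1)]$ is a polynomial in the lower factorial moments of $N_1^n$, and each of these contributes at most $O((n/\log n)^{j/2})$ for $j<k$, it suffices to analyze the right-hand side above. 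The plan is to insert the local limit theorem (\ref{limit1dim}) at $x=0$, namely $u_1(m)=(1+o(1))/(2\sqrt{\pi c_1 m\log m})$, and pass from the Riemann sum to the integral
\[
k!\int_{0<t_1<\cdots<t_k<n} \prod_{i=1}^{k}\frac{dt_i}{2\sqrt{\pi c_1\,(t_i-t_{i-1})\,\log(t_i-t_{i-1})}}\qquad(t_0:=0).
\]
The substitution $t_i=ns_i$ pulls out a factor $(n/\log n)^{k/2}$, because $\log(n(s_i-s_{i-1}))=\log n + O(1)$ whenever the gap $s_i-s_{i-1}$ is bounded below by a positive constant. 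The remaining Dirichlet integral is classical,
\[
\int_{0<s_1<\cdots<s_k<1}\prod_{i=1}^k (s_i-s_{i-1})^{-1/2}\,ds_1\cdots ds_k = \frac{\Gamma(1/2)^k}{\Gamma(k/2+1)} = \frac{\pi^{k/2}}{\Gamma(k/2+1)},
\]
and combining the prefactors gives exactly $k!/((2\sqrt{c_1})^k\Gamma(k/2+1))\cdot(n/\log n)^{k/2}$, as required.

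The main technical obstacle will be the sum-to-integral comparison, since the slowly varying factor $\log(j_i-j_{i-1})$ behaves badly when some consecutive differences are small. The standard remedy is to split the multi-index sum according to whether all gaps $j_i-j_{i-1}$ exceed $n^{\varepsilon}$ (for some small fixed $\varepsilon>0$): on this ``bulk'' region each $\log(j_i-j_{i-1})=(1+o(1))\log n$ uniformly, so the leading asymptotic goes through. The complementary region, where at least one gap is at most $n^{\varepsilon}$, contributes a factor of $O(n^{\varepsilon/2}(\log n)^{1/2})$ worse per offending coordinate but is accompanied by a saving of at least $n^{1-\varepsilon}$ from the lost coordinate, hence is of lower order; the summability of the error term from Theorem~\ref{theoremlocal1} lets us replace $u_1$ by its asymptotic uniformly in $m$. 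Once the moments are shown to converge and Carleman's condition is verified, weak convergence follows from the method-of-moments theorem, completing the proof.
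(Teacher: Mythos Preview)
Your proposal is correct and follows essentially the same route as the paper: method of moments, the factorial-moment identity reducing $\mathbb{E}[(N_1^n)_k]$ to the multi-sum $k!\sum u_1(j_1)u_1(j_2-j_1)\cdots u_1(j_k-j_{k-1})$, insertion of the local limit theorem, and a splitting of the index set by gap size to pass to the Dirichlet integral. The paper uses a two-tier cutoff ($n/\log n$ and $\sqrt{n}/\log n$) and inducts on $k$ for the smallest-gap region, whereas you use a single threshold $n^{\varepsilon}$; both work, and your version is a bit cleaner. You also spell out Carleman's criterion and the passage from factorial to raw moments, which the paper defers to \cite{D-D-K}. One minor slip: your bookkeeping on the complementary region (the ``$n^{1-\varepsilon}$ saving'') is off by a square root---the small-gap coordinate contributes $O(n^{\varepsilon/2}/\sqrt{\log n})$ against a full contribution of $O(\sqrt{n/\log n})$, so the saving is $n^{(1-\varepsilon)/2}$---but this still gives $o((n/\log n)^{k/2})$ and the argument goes through.
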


\begin{proof}
As in the case of \cite{D-D-K}, it suffices to prove that for $k$ fix:
\begin{eqnarray}
\label{harom}
 \sum_{n_i \geq 3, n_1 + n_2 + ... + n_k \leq n} \prod_j \frac{1}{ \sqrt{ n_j \log n_j }} \sim \frac{n^{k/2} }{ \log^{k/2} n} \frac{ \Gamma (1/2)^k}{ \Gamma (k/2 + 1) }.  
\end{eqnarray}

Note that $\Gamma(1/2) = \sqrt \pi$. Elementary calculations show that 
(\ref{harom}) holds for $k=1$.
For $k > 1$ define
\begin{eqnarray*}
\mathcal{H}_1 &=& \{ n_i \geq \frac{n}{ \log n}, n_1 + n_2 + ... + n_k \leq n \}  \\
\mathcal{H}_2 &=& \{ n_i \geq \frac{ \sqrt n}{ \log n }, \exists j : n_j < \frac{n}{ \log n }, n_1 + n_2 + ... + n_k \leq n  \} \\
\mathcal{H}_3 &=& \{ n_i \geq 3, \exists j : n_j < \frac{ \sqrt n}{ \log n }, n_1 + n_2 + ... + n_k \leq n \}
\end{eqnarray*}

Now, split the sum in (\ref{harom}) into three parts, sums over $\mathcal{H}_i$'s, $1 \leq i \leq 3$. \\
Define $s_j = n_j / n$ and observe that
\[  \frac{1}{ \sqrt{ n_j \log n_j }} =  \frac{1}{ \sqrt{ s_j }}  \frac{1}{ \sqrt{ n }}  \frac{1}{ \sqrt{ \log{s_j} + \log n }}.   \]
Since $\log{s_j} + \log n =  (1+o(1)) \log n$ uniformly in $\mathcal{H}_1$, 
it is not difficult to deduce that 
\begin{eqnarray*}
&&\sum_{(n_1,n_2,...,n_k) \in \mathcal{H}_1} \prod_j \frac{1}{ \sqrt{ n_j \log n_j }} \\
&\sim&  \frac{n^{k/2} }{ \log^{k/2} n}
\int ... \int_{0 <t_1 < t_2 < ... <t_k<1} \frac{1}{ \sqrt {t_1}}  \frac{1}{ \sqrt {t_2-t_1}}
... \frac{1}{ \sqrt {t_k-t_{k-1}}} dt_1 ... dt_k \\
&=& \frac{n^{k/2} }{ \log^{k/2} n} \frac{ \Gamma (1/2)^k}{ \Gamma (k/2 + 1) }.
\end{eqnarray*}

For the sum over $\mathcal{H}_2$, consider the case when
$\frac{ \sqrt n}{ \log n } < n_1 < \frac{n}{ \log n }$ and
$ n_i > \frac{n}{ \log n }$ for $ 2 \leq i$ 
(other cases can be treated similarly).
Now, $\log{s_1} + \log n >  (1/2+o(1)) \log n$ and
$\log{s_i} + \log n =  (1+o(1)) \log n$ for $2 \leq i$, uniformly. Thus,

\begin{eqnarray*}
&& \sum_{ \frac{ \sqrt n}{ \log n } < n_1 < \frac{n}{ \log n }, 
n_i > \frac{n}{ \log n }: 2 \leq i } \prod_j \frac{1}{ \sqrt{ n_j \log n_j }} \\
&<&
(\sqrt 2 + o(1)) \sum_{ \frac{ \sqrt n}{ \log n } < n_1 < \frac{n}{ \log n },
n_i > \frac{n}{ \log n }: 2 \leq i } \prod_j \frac{1}{ \sqrt{ s_j n \log n }} 
< 2 \frac{n^{k/2}}{ \log^{k/2} n} o(1).
\end{eqnarray*}
For the third sum, the proof goes by induction on $k$.
Assuming that (\ref{harom}) holds for $k-1$, one has
\[
\sum_{(n_1,n_2,...,n_k) \in \mathcal{H}_3} \prod_{j=1}^k \frac{1}
{ \sqrt{ n_j \log n_j }} < k \frac{\sqrt n}{\log n}
\sum_{n_1+n_2+...+n_{k-1} \leq n } \prod_{j=1}^{k-1} \frac{1}
{ \sqrt{ n_j \log n_j }},
\]
which is $o \left( \frac{n^{k/2}}{ \log^{k/2} n} \right)$.
(\ref{harom}) follows. 

\end{proof}

\section{Final remark}
\label{secremarks}

As it was mentioned in the Introduction, in the case of Lorentz process
with infinite horizon, another type of 'recurrence' can happen.
Namely, if a scatterer is moved into a corridor (here corridor means infinite
trajectories without collision), then there are arbitrary long flights where
in the periodic Lorentz process there would not be collision, while 
in the perturbed one there are some. In the random walk context, 
it can happen that the unperturbed 
walk would fly over the origin, while the perturbed one has to stop. Note that
this phenomenon is evitable if one considers finite horizon, or in
the case of infinite horizon just shrinks one of the scatterers as a 
perturbation. However, the same behavior (i.e. the Brownian
limit with the same scaling) is conjectured in this general 
perturbation, as well. The aim of the following computation is to give
some reason for this conjecture. As the constants do not play 
important role in the sequel, they will not be computed 
and every appearance of $C$ may denote different constant.\\
Define
\[ a_{n} = \mathbb{P} ((0,0) \in \overline{S_{n},S_{n+1}}, (0,0) \neq S_n ) \]
to be the probability of the event that step $n+1$ flies over the origin.
Observe that
\[ a_{n} = \frac 12  \mathbb{P} \left( (S_{n})_{1} = 0, |X_{n+1}| 
\geq |(S_{n})_{2}| \right), \]
where $(S_{n})_{i}$ denotes the $i^{th}$ coordinate of $S_n$.
The local limit theorem implies
 $ a_{n} < C \frac{1}{\sqrt{n \log n}} b_n$,  where
\[ b_n = \mathbb{P} \left( |X_{n+1}| \geq |(S_{n})_{2}| \right). \]
For the estimation of $b_n$ observe that if  $|(S_{n})_{2}| > d_n$, then
$b_n$ is bounded by $C \sum_{k=d_n}^{ \infty } k^{-3} =
O(d_n^{-2})$. On the other hand, the probability of $|(S_{n})_{2}|$ 
being smaller than $d_n$ is roughly estimated by
$O(d_n \frac{1}{\sqrt{n \log n}})$. Thus
\[ b_n = O(d_n^{-2}) + O(d_n \frac{1}{\sqrt{n \log n}}) = 
O \left( (n \log n  )^{-1/3} \right), \]
whence
\[  a_n = O \left( (n \log n)^{-5/6} \right). \]
If $\rho _n$ denotes the number of jumps over the origin up to time $n$
and $\theta _n =\mathbb{E} (\rho _n)$, then we have just proved
\[ \theta _{n} =o(n^{1/6}). \]
Note that in the case of \cite{Sz-Te} and \cite{Pa-Sz} the key observation
was that the time spent at the perturbed area up to $n$ is much smaller than
$\sqrt n$.
That is why it is reasonable to expect the same Brownian limit in the case
of such perturbation, where we introduce some nice further step at the
time of flying over the origin, too. Here nice means that presumably the
step distribution should have some finite moment of order $\varepsilon$.
This could be subject of future research.

\begin{acknowledgements}
Special thanks are due to Domokos Sz\'{a}sz and
Tam\'{a}s Varj\'{u} as this work was inspired by personal communications
with them. The author is also grateful to Andr\'{a}s Telcs for reading the 
manuscript and making useful comments.
\end{acknowledgements}

\end{document}